\def\re{{\Re}}
\def\R{\mathbb{R}}
\def\C{\mathbb{C}}
\newtheorem{theorem}{Theorem}
\newtheorem{lemma}[theorem]{Lemma}
\newenvironment{proof}{\noindent {\it Proof}~}{}
\title{
\mbox{Stability of the Modified Craig--Sneyd scheme for}
\mbox{two-dimensional convection-diffusion equations} 
\mbox{with mixed derivative term}}
\author{
K.~J.~in 't Hout\thanks{Department of Mathematics and Computer Science,
University of Antwerp, Middelheimlaan 1, 2020 Antwerp, Belgium
(e-mail: \texttt{karel.inthout@ua.ac.be}).}
~and
C. Mishra\thanks{Department of Mathematics and Computer Science,
University of Antwerp, Middelheimlaan 1, 2020 Antwerp, Belgium
(e-mail: \texttt{chittaranjan.mishra@ua.ac.be}).}
}
\date{April 7, 2010}
\begin{document}
\maketitle

\begin{abstract}
The Modified Craig--Sneyd (MCS) scheme is a promising splitting scheme of the 
ADI type introduced by In 't Hout \& Welfert [{\it Appl. Num. Math.~\textbf{59} 
(2009)}] for multi-dimensional pure diffusion equations having mixed 
spatial-derivative terms.
In this paper we investigate the extension of the MCS scheme to two-dimensional 
convection-diffusion equations with a mixed derivative.
Both necessary and sufficient conditions on the parameter $\theta$ of the scheme 
are derived concerning unconditional stability in the von Neumann sense.
\end{abstract}
\vspace{0.2cm}

\noindent
\textbf{Keywords:} Initial-boundary value problems, convection-diffusion equations, 
method-of-lines, ADI splitting schemes, von Neumann stability analysis, Fourier 
transformation.
\vspace{3mm}

\noindent
\textbf{AMS subject classifications:}
65L04, 65L05, 65L20, 65M12, 65M20, 91G60.
\vfill\eject

\newpage

\setcounter{equation}{0}
\section{Introduction}\label{intro}
We consider the numerical solution of initial value problems for large 
systems of ordinary differential equations (ODEs),
\begin{equation}\label{ODE}
U'(t) = F(t,U(t)) \quad (t\ge 0),\quad U(0)=U_0,
\end{equation}
with given  vector-valued function $F$, given initial vector $U_0$, 
and unknown vectors $U(t)$ (for $t>0$).
Our interest in this paper lies in systems (\ref{ODE}) that arise from
semi-discretization of initial-boundary value problems for two-dimensional 
convection-diffusion equations possessing a mixed spatial-derivative term,
\begin{equation}\label{PDE}
\frac{\partial u}{\partial t} = 
d_{11} u_{xx}+(d_{12}+d_{21}) u_{xy}+d_{22} u_{yy}+c_1 u_x+c_2 u_y.
\end{equation}
Here $c=(c_i)$ and $D=(d_{ij})$ denote a given real vector and a given 
positive semi-definite real matrix, respectively.
A main application area of equations of the kind (\ref{PDE}) is financial 
option pricing theory, where mixed derivative terms $u_{xy}$ arise 
naturally since the underlying Brownian motions are usually correlated 
to each other.
Extensive details and examples of financial applications are given in, 
for example, the references \cite{S04,TR00,W99}.

For the numerical solution of semi-discrete problems (\ref{ODE}), splitting 
schemes form an effective and popular means, cf.~e.g.~\cite{HV03,MiGr80}.
This paper is devoted to the analysis of a recent splitting scheme of the 
Alternating Direction Implicit (ADI) type that has been tailored so as to 
deal with equations possessing a mixed derivative term.
Let $\theta >0$ be a given fixed parameter.
Assume the right-hand side function $F$ is decomposed into a sum
\begin{equation}\label{splitting}
F(t,v) = F_0(t,v) + F_1(t,v) + F_2(t,v),
\end{equation}
where $F_0$ represents the contribution to $F$ stemming from the mixed derivative 
term, and $F_j$ (for $j= 1,2$) represents the contribution to $F$ stemming from
all spatial derivative terms in the $j$-th spatial direction.
Let $\Delta t >0$ be a given time step and define temporal grid points by
$t_n= n\cdot \Delta t$ ($n=0,1,2,\ldots$).
We consider the following splitting scheme for (\ref{ODE}), generating in
a one-step fashion successive approximations $U_1, U_2, U_3,\ldots $ to 
$U(t_1), U(t_2), U(t_3), \ldots$\,:
\vskip0.5cm\noindent
\begin{equation}\label{MCS}
\left\{\begin{array}{l}
Y_0 = U_{n-1}+\Delta t\, F(t_{n-1},U_{n-1}), \\ \\
Y_j = Y_{j-1}+\theta\Delta t \left(F_j(t_n,Y_j)-F_j(t_{n-1},U_{n-1})\right)~~~(j=1,2),\\ \\
\widehat{Y}_0 = Y_0+ \theta \Delta t \left(F_0(t_n,Y_2)-F_0(t_{n-1},U_{n-1})\right),\\ \\
\widetilde{Y}_0 = \widehat{Y}_0+ (\frac{1}{2}-\theta )\Delta t \left(F(t_n,Y_2)-F(t_{n-1},U_{n-1})\right), \\ \\
\widetilde{Y}_j = \widetilde{Y}_{j-1}+\theta\Delta t \,(F_j(t_n,\widetilde{Y}_j)-F_j(t_{n-1},U_{n-1}))~~~(j=1,2),\\ \\
U_n = \widetilde{Y}_2.
\end{array}\right.
\end{equation}
Method (\ref{MCS}) is called the {\it Modified Craig--Sneyd (MCS) scheme}.
It has recently been introduced, in a slightly more general form, by In 't Hout 
\& Welfert \cite{IHW09}.
Taylor expansion yields that the MCS scheme has classical order of consistency 
equal to two for any value~$\theta$.

The MCS scheme can be viewed as an extension of the second-order Craig--Sneyd 
(CS) scheme proposed in \cite{CS88}.
The latter scheme, called ``iterated scheme'' in loc.~cit., is equivalent to 
(\ref{MCS}) with parameter value $\theta = \frac{1}{2}$.

A perusal of (\ref{MCS}) shows that the $F_0$ term is always treated {\it explicitly}, 
whereas the $F_1$ and $F_2$ terms are treated {\it implicitly}.
More precisely, the MCS scheme starts with an explicit Euler step applied to
the full system (\ref{ODE}) which is succeeded by two implicit corrections 
corresponding to each of the two spatial directions. 
Subsequently, an explicit update is performed, which is followed again by 
two implicit, unidirectional corrections.
Accordingly, the MCS scheme retains the well-known key advantage of ADI schemes 
over standard implicit methods, such as the Crank--Nicolson scheme, that the 
(linear or nonlinear) systems to be solved in each time step are much easier 
to handle.

The adaptation of ADI schemes to convection-diffusion equations with 
mixed derivative terms has been studied by a number of authors. 
Several stability results, in the sense of von Neumann, have been obtained.
McKee et al.~\cite{MM70,MWW96} considered a simpler version of (\ref{MCS}),
which is equivalent to the first two lines with $U_n = Y_2$. 
This basic scheme, also known as the Douglas scheme, is of order one for any 
value $\theta$ in the presence of a mixed derivative term.
McKee et al.~showed that if $\theta = \frac{1}{2}$, then it is unconditionally 
stable when applied to a standard finite difference (FD) discretization of 
(\ref{PDE}).
Next, Craig \& Sneyd \cite{CS88} formulated the second-order CS scheme and 
proved that this scheme is unconditionally stable in the case of (\ref{PDE}) 
with $c\equiv \mathbf{0}$.
Recently In 't Hout \& Welfert \cite{IHW07,IHW09} extended the above
stability results in various ways. 
We state here the main results pertinent to the situation at hand.
Firstly, for the CS scheme unconditional stability was proved \cite{IHW07} 
in the general case of (\ref{PDE}).
Secondly, it was shown \cite{IHW09} that in the case of (\ref{PDE}) with
$c\equiv \mathbf{0}$ the MCS scheme (\ref{MCS}) is unconditionally stable 
whenever $\theta \ge \frac{1}{3}$.

Up to now it is an important open question when the MCS scheme, with 
$\theta \not= \frac{1}{2}$\,, is unconditionally stable in the application 
to general equations (\ref{PDE}), i.e., with arbitrary $c$ and positive 
semi-definite $D$.
As it turns out, an analysis of this is not straightforward, related to the 
fact that the eigenvalues of the semi-discrete linear operators move from 
the real line in the pure diffusion case to the complex plane in the general, 
convection-diffusion case.
In the present paper we shall arrive at positive results on the above question.

For the stability analysis we consider the linear scalar test equation
\begin{equation}\label{test}
 U'(t) = (\lambda_0+\lambda_1+\lambda_2)U(t)
\end{equation}
with complex constants $\lambda_j$ ($0\le j\le 2$).
When applied to (\ref{test}), the MCS scheme (\ref{MCS}) reduces to the 
scalar iteration
\begin{equation}\label{unS}
U_n = S_\theta (z_0,z_1,z_2)\,U_{n-1}
\end{equation}
with $z_j = \Delta t\cdot \lambda_j$ \,($0\le j\le 2$) and
\begin{equation}\label{S}
S_\theta (z_0,z_1,z_2) =
1+\frac{z_0+z}{p}+\theta\,\frac{z_0(z_0+z)}{p^2}+
(\tfrac{1}{2}-\theta)\,\frac{(z_0+z)^2}{p^2}\,,
\end{equation}
where we use the notation
\begin{equation*}\label{zp}
z = z_1+z_2 \quad\text{and}\quad p = (1-\theta z_1)(1-\theta z_2).
\end{equation*}
The iteration (\ref{unS}) is stable if
\begin{equation}
\label{stabcondS}
|S_\theta (z_0,z_1,z_2)|\le 1.
\end{equation}

In the von Neumann framework, the $\lambda_j$ represent eigenvalues of
the linear operators $F_j$ that are obtained after semi-discretization,
on a uniform spatial grid, of the convection-diffusion equation (\ref{PDE}) 
with constant coefficients and periodic boundary condition. 
Corresponding to the positive semi-definiteness of the diffusion matrix $D$, 
it was shown in \cite{IHW07} (cf.~also Sect.~\ref{Conclusion}) that for 
standard FD discretizations the following condition on the scaled 
eigenvalues $z_j$ is fulfilled,
\begin{equation}
\label{cond}
|z_0|\le 2\sqrt{\Re z_1\Re z_2}\,,~~\Re z_1\le 0,~~\Re z_2\le 0,
\end{equation}
where all bounds are sharp.
In view of this, a natural stability requirement on the scheme (\ref{MCS}) 
when applied to equations (\ref{PDE}) with mixed derivative terms is 
that $(\ref{stabcondS})$ holds whenever $(\ref{cond})$ is satisfied.

An outline of the rest of this paper is as follows.
In Sect.~\ref{Stabresults} we study for which parameter values 
$\theta$ the implication $(\ref{cond}) \Rightarrow (\ref{stabcondS})$ 
is fulfilled.
Four cases are investigated, depending on whether $z_0$ is real or 
complex valued and whether $z_1, z_2$ are (both) real or complex 
valued.
In Sect.~\ref{Conclusion} the results of Sect.~\ref{Stabresults}
are applied and discussed relevant to an application of the MCS
scheme (\ref{MCS}) to (\ref{PDE}).

\section{Stability results for the MCS scheme}\label{Stabresults}
Let $I$ denote the imaginary unit.
In this section we study the stability requirement
$(\ref{cond}) \Rightarrow (\ref{stabcondS})$.
The following introductory result gives a criterion on $\theta$ for 
the case $z_0=0$.
This is pertinent to the situation where no mixed derivative term is 
present in (\ref{PDE}).

\begin{theorem}\label{Thm1}
There holds $|S_\theta (0,z_1,z_2)| \le 1$ for all $z_1, z_2 \in \C$ 
with $\Re z_1 \le 0$, $\Re z_2 \le 0$ if and only if 
$\theta \ge \tfrac{1}{4}$.
\end{theorem}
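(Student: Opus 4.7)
The plan is to reduce the question to a univariate one in the variable $w := z/p$: since $z_0=0$,
\[
S_\theta(0,z_1,z_2) \;=\; 1 + w + (\tfrac12-\theta)\,w^2.
\]
I will first identify the exact image set $\Omega\subset\C$ of the map $(z_1,z_2)\mapsto w$ on the domain $\{\Re z_1\le 0,\ \Re z_2\le 0\}$, and then use the maximum modulus principle to reduce the bound to the boundary $\partial\Omega$.

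To identify $\Omega$, I substitute $\zeta_j=-z_j$ (so $\Re\zeta_j\ge 0$) and apply the M\"obius transformation $\mu_j = \theta\zeta_j/(1+\theta\zeta_j)$, which carries the closed right half-plane bijectively onto the disk $\{|\mu-\tfrac12|\le \tfrac12\}$. A short calculation then gives
\[
w \;=\; -\frac{1-\nu_1\nu_2}{2\theta},\qquad \nu_j := 1-2\mu_j,\ |\nu_j|\le 1,
\]
and since the product of two numbers in the closed unit disk sweeps out that same disk, $\Omega$ is precisely the closed disk $\{w\in\C:|w+\tfrac{1}{2\theta}|\le \tfrac{1}{2\theta}\}$, which is tangent to the imaginary axis at the origin and has leftmost real point $-1/\theta$.

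Since $w\mapsto 1+w+(\tfrac12-\theta)w^2$ is a polynomial, the maximum modulus principle reduces the problem to showing $|S_\theta|\le 1$ on $\partial\Omega$. On this circle the defining equation is equivalent to $\Re w = -\theta|w|^2$. Expanding $|S_\theta|^2$ in real and imaginary parts of $w$ and substituting this identity causes the constant, linear, and $|w|^2$ contributions to cancel, leaving a single $|w|^4$ term, and the resulting expression factors as
\[
|S_\theta(0,z_1,z_2)|^2 - 1 \;=\; -\tfrac{1}{4}(4\theta-1)(2\theta-1)^2\,|w|^4\qquad (w\in\partial\Omega).
\]
The sign is thus governed entirely by the factor $4\theta-1$, giving sufficiency ($\theta\ge\tfrac14\Rightarrow|S_\theta|\le 1$); for necessity, the choice $z_1=z_2=i/\theta$ (which satisfies $\Re z_j = 0$) sends $w$ to $-1/\theta\in\partial\Omega$, and the displayed formula yields $|S_\theta|>1$ whenever $\theta<\tfrac14$.

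The main obstacle is the first step: recognising that the image $\Omega$ is a disk and finding the right composition of M\"obius substitutions to expose the \emph{product of two unit-disk elements} structure. Once $\Omega$ is identified, the boundary computation is routine, and the perfect square $(2\theta-1)^2$ that emerges explains both the sharpness of the threshold $\theta=\tfrac14$ and the fact that at $\theta=\tfrac12$ (the Craig--Sneyd case) one already has $|S_\theta|\equiv 1$ on $\partial\Omega$.
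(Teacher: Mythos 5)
Your proof is correct, and it takes a genuinely different route from the paper's. The paper applies the maximum principle separately in $z_1$ and $z_2$ to pass to the distinguished boundary $z_1=Ib_1$, $z_2=Ib_2$, and then carries out a direct two-real-parameter computation (in $u=1-\theta^2b_1b_2$, $v=b_1+b_2$) arriving at
$|p^2+pz+(\tfrac12-\theta)z^2|^2-|p^2|^2=\bigl[(\theta^2-2\theta+\tfrac12)^2-\theta^4\bigr]v^4$,
whence the criterion $|\theta^2-2\theta+\tfrac12|\le\theta^2\Leftrightarrow\theta\ge\tfrac14$. You instead observe that $S_\theta(0,z_1,z_2)$ depends on $(z_1,z_2)$ only through $w=z/p$, identify the range of $w$ as the disk $|w+\tfrac{1}{2\theta}|\le\tfrac{1}{2\theta}$ via M\"obius maps and the product-of-unit-disks fact, and then do a one-variable boundary computation; I verified the factorization $|S_\theta|^2-1=-\tfrac14(4\theta-1)(2\theta-1)^2|w|^4$ on $\partial\Omega$, and your explicit witness $z_1=z_2=I/\theta$ (giving $w=-1/\theta$) settles necessity even if one only proves containment of the image in the disk rather than equality. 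The two computations are in fact reconcilable: $(\theta^2-2\theta+\tfrac12)^2-\theta^4$ factors as $-\tfrac14(4\theta-1)(2\theta-1)^2$, and $|w|^4=v^4/|p|^4$ on the distinguished boundary. What your approach buys is structural insight: the disk $\Omega$ is exactly the content of the paper's Lemma 2.6 in the degenerate case $\Re z_1\Re z_2=0$ (which the paper derives later by a Cauchy--Schwarz argument and uses only in Theorem 2.7), and the perfect square $(2\theta-1)^2$ explains why the Craig--Sneyd value $\theta=\tfrac12$ gives $|S_\theta|\equiv1$ on the boundary. The paper's computation is shorter and requires no image-identification step, but is less transparent about where the threshold comes from. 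The only imprecision in your write-up is the word ``bijectively'': the half-plane maps onto the disk minus the point $\mu=1$ (attained only as $\zeta\to\infty$), but since sufficiency needs only containment and necessity uses an explicit finite point, this is harmless.
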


\begin{proof}
The rational function $S_\theta (0, z_1, z_2)$ has no poles in the set 
$\Re z_1, \Re z_2 \le 0$
and therefore attains its maximum on the boundary of this set.
Thus assume $z_1 = I b_1$, $z_2 = I b_2$ with $b_1, b_2 \in \mathbb{R}$.
We have
\begin{equation*}
|S_\theta (0, z_1, z_2)| \leq 1  ~~\Longleftrightarrow~~
|p^2 + pz + (\tfrac{1}{2}-\theta)z^2|^2 - |p^2|^2 \leq 0.
\end{equation*}
Write $u = 1-\theta^2b_1b_2$ and $v = b_1+b_2$.
Then
\begin{equation*}
p = u - \theta I v ~,~
p^2 = u^2 - \theta^2 v^2 -  2 \theta I uv ~,~ pz = \theta v^2 + I uv ~,~ z^2 = -v^2
\end{equation*}
and it follows after some algebraic manipulations that
\begin{equation*}
|p^2 + pz + (\tfrac{1}{2}-\theta)z^2|^2 - |p^2|^2 =
\left[ \left( \theta^2-2\theta+\tfrac{1}{2} \right)^2 - \theta^4 \right] v^4.
\end{equation*}
Hence, 
\[
|S_\theta (0,z_1,z_2)| \le 1 \quad \textrm{whenever } \Re z_1, \Re z_2 \le 0 
\]
if and only if
\begin{equation*}
| \theta^2-2\theta+\tfrac{1}{2} | \le \theta^2,
\end{equation*}
which is equivalent to $\theta \ge \tfrac{1}{4}$.
\quad\mbox{\tiny $\blacksquare$}
\end{proof}
\vskip0.3cm

In \cite{IHW09} the stability of ADI schemes for pure diffusion 
equations with mixed derivatives was analyzed.
This concerns the case where all $z_j$ are real-valued.
For the MCS scheme and two spatial dimensions, the following criterion 
on $\theta$ was obtained.

\begin{theorem}\label{Thm2}
There holds $|S_\theta (z_0,z_1,z_2)| \le 1$ whenever $z_0, z_1, z_2 \in \R$ 
satisfy $(\ref{cond})$ if and only if $\theta \ge \tfrac{1}{3}$.
\end{theorem}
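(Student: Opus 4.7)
The key observation is that under (\ref{cond}) restricted to $\R$ we have $z_1, z_2 \le 0$, so $p = (1-\theta z_1)(1-\theta z_2) \ge 1 > 0$; moreover $|z_0| \le 2\sqrt{z_1 z_2}$. Multiplying (\ref{S}) through by $p^2$ and collecting powers of $z_0$ gives
\begin{equation*}
p^2 S_\theta = \tfrac12 z_0^2 + \bigl[p + (1-\theta)(z_1+z_2)\bigr]\, z_0 + \bigl[p^2 + p(z_1+z_2) + (\tfrac12-\theta)(z_1+z_2)^2\bigr],
\end{equation*}
a convex quadratic in $z_0$ with leading coefficient $\tfrac12>0$. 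Hence the maximum of $S_\theta$ on the admissible interval is attained at an endpoint $z_0 = \pm 2\sqrt{z_1 z_2}$, and the infimum over all of $\R$ (hence over the interval) is the vertex value $S_\theta(z_0^{\ast})$ with $z_0^{\ast} = -p - (1-\theta)(z_1+z_2)$. The requirement (\ref{stabcondS}) thus separates into a lower-bound check at $z_0^{\ast}$ and an upper-bound check at the endpoints.

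For the lower bound, a direct (but patient) expansion yields the factorization
\begin{equation*}
2p^2 \bigl[S_\theta(z_0^{\ast}) + 1\bigr] \;=\; \bigl(3p - \theta(z_1+z_2)\bigr)\bigl(p + \theta(z_1+z_2)\bigr) \;=\; \bigl(3p - \theta(z_1+z_2)\bigr)\bigl(1 + \theta^2 z_1 z_2\bigr),
\end{equation*}
and both factors are manifestly positive when $z_1, z_2 \le 0$. Therefore $S_\theta \ge -1$ holds for every admissible $(z_0, z_1, z_2)$ and every $\theta > 0$; the lower bound imposes no restriction on $\theta$. All constraints on $\theta$ thus come from the upper bound $S_\theta \le 1$. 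Writing $s = z_0 + z_1 + z_2$, which is $\le 0$ by AM--GM applied to (\ref{cond}), the inequality $p^2(1 - S_\theta) \ge 0$ factors as $-s\bigl[\,p + \theta z_0 + (\tfrac12-\theta)s\,\bigr] \ge 0$; the bracketed factor is linear in $z_0$ with positive slope $\tfrac12$, so its worst case on $|z_0| \le 2\sqrt{z_1 z_2}$ occurs at $z_0 = -2\sqrt{z_1 z_2}$, where it simplifies to
\begin{equation*}
1 + (2\theta - \tfrac12)(-z_1-z_2) + \theta^2 z_1 z_2 - \sqrt{z_1 z_2} \ge 0.
\end{equation*}

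For $\theta \ge \tfrac14$ this left-hand side is non-decreasing in $-z_1-z_2$ for fixed $z_1 z_2$, hence minimized at $z_1 = z_2$; with $t := \sqrt{z_1 z_2}$ this collapses to the single-variable inequality $f(t) := \theta^2 t^2 + (4\theta-2) t + 1 \ge 0$ for all $t \ge 0$. A discriminant computation ($\mathrm{disc}(f) = 4(3\theta-1)(\theta-1)$, together with the observation that for $\theta>1$ both roots of $f$ are negative by Vieta) shows $f \ge 0$ on $[0,\infty)$ if and only if $\theta \ge \tfrac13$, which establishes the ``if'' direction. For the converse, tracing back a root of $f$ for $\theta < \tfrac13$, the extremal choice $z_1 = z_2 = -1/\theta$, $z_0 = -2/\theta$ saturates (\ref{cond}) and a short direct substitution gives $p = 4$, $z_0+z_1+z_2 = -4/\theta$, and
\begin{equation*}
S_\theta(z_0, z_1, z_2) \;=\; 1 + \frac{1-3\theta}{2\theta^2},
\end{equation*}
which is strictly greater than $1$ whenever $\theta < \tfrac13$. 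The only non-routine step in this program is the clean factorization of the vertex value in the lower-bound argument; everything else is a systematic reduction of a three-variable inequality to a quadratic in one variable, followed by a discriminant analysis that pinpoints $\theta = \tfrac13$ as the exact threshold and simultaneously produces the sharp counterexample.
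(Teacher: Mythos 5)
Your proof is correct, and it is worth noting that the paper itself gives no argument for this theorem at all: it simply cites \cite[Thm.~2.5]{IHW09}, so yours is a genuinely self-contained replacement. I checked the key identities: with $w=p+(1-\theta)z$ and $q=p^2+pz+(\tfrac12-\theta)z^2$ one indeed has $p^2S_\theta=\tfrac12 z_0^2+wz_0+q$ (the same form the paper later uses as (\ref{Sform}) in Theorem~\ref{Thm4}); the vertex computation gives $2p^2[S_\theta(z_0^\ast)+1]=3p^2+2\theta pz-\theta^2z^2=(3p-\theta z)(p+\theta z)$ with $p+\theta z=1+\theta^2z_1z_2\ge 1$ and $3p-\theta z\ge 3$, so the lower bound is free for every $\theta>0$; and $p^2(S_\theta-1)=s\bigl[p+\theta z_0+(\tfrac12-\theta)s\bigr]$ follows directly from (\ref{S}), with $s\le 0$ by AM--GM. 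The reduction of the bracket to $f(t)=\theta^2t^2+(4\theta-2)t+1$ at $z_1=z_2=-t$, $z_0=-2t$, the discriminant $4(3\theta-1)(\theta-1)$ together with the Vieta argument for $\theta>1$, and the explicit counterexample $z_1=z_2=-1/\theta$, $z_0=-2/\theta$ giving $S_\theta=1+(1-3\theta)/(2\theta^2)$ all check out; note the counterexample works for every $\theta<\tfrac13$ independently of the $\theta\ge\tfrac14$ monotonicity step, so both directions are fully covered. Two cosmetic points: the opening remark that the maximum of the convex quadratic sits at an endpoint is never actually used (the upper bound is handled via the factorization instead), and ``the infimum over all of $\R$ (hence over the interval)'' should read that the vertex value is a lower bound for the values on the interval; neither affects correctness. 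Your argument also dovetails nicely with the paper's own machinery, since the quadratic-in-$z_0$ form reappears in the proof of Theorem~\ref{Thm4}.
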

\begin{proof}
See \cite[Thm.~2.5]{IHW09}.
\quad\mbox{\tiny $\blacksquare$}
\end{proof}
\vskip0.3cm
In most applications, also a convection term is present.
Accordingly, one is led to considering complex-valued $z_1, z_2$.
The next theorem gives a necessary condition on $\theta$ for this 
situation.

\begin{theorem}\label{Thm3}
Suppose $|S_\theta (z_0,z_1,z_2)| \le 1$ for all $z_0\in \R$ and 
$z_1, z_2\in \C$ satisfying $(\ref{cond})$.
Then $\theta \geq \frac{2}{5}$.
\end{theorem}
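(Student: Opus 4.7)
The plan is to prove this necessary condition by exhibiting, for each $\theta<\tfrac25$, specific values $z_0\in\mathbb{R}$ and $z_1,z_2\in\mathbb{C}$ satisfying \eqref{cond} at which $|S_\theta(z_0,z_1,z_2)|>1$. I would begin by narrowing down where the extremal triple can sit. Since $S_\theta$ is rational in $z_1,z_2$ with its only poles on $\theta z_j=1$ (outside the admissible region $\Re z_j\le0$), the supremum of $|S_\theta|$ over the admissible set is attained on its topological boundary. That boundary splits into two pieces: either $\Re z_j=0$ for some $j$ (which forces $z_0=0$ and reduces to Theorem~\ref{Thm1}, giving only $\theta\ge\tfrac14$) or $|z_0|=2\sqrt{\Re z_1\Re z_2}$ with $\Re z_j<0$. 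The strict improvement to $\tfrac25$ must therefore come from the latter piece.

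Two further natural candidates should be ruled out. If $z_1,\bar z_2$ are complex conjugates then $p=|1-\theta z_1|^2\in\mathbb{R}_{>0}$, $z=z_1+z_2\in\mathbb{R}$, and hence $S_\theta$ is real, so the analysis collapses to the real case of Theorem~\ref{Thm2} and yields only $\theta\ge\tfrac13$. Likewise, in the stiff limit $z_1\to-\infty$ with $z_2$ fixed one finds, after dividing by the dominant powers, that
\[
S_\theta(z_0,z_1,z_2)\longrightarrow\;1-\frac{1}{\theta(1-\theta z_2)}+\frac{\tfrac12-\theta}{\theta^2(1-\theta z_2)^2},
\]
independently of $z_0$. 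Setting $w=1/(\theta(1-\theta z_2))$, the admissibility of $z_2$ restricts $w$ to the closed disk $|w-1/(2\theta)|\le 1/(2\theta)$, and by the maximum modulus principle the worst case on this disk again only forces $\theta\ge\tfrac14$. Consequently the needed extremal configuration must be genuinely asymmetric, with finite $z_1,z_2$ having nonconjugate imaginary parts, and with $z_0\in\mathbb{R}$ saturating $|z_0|=2\sqrt{\Re z_1\Re z_2}$.

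The concrete plan is then:
\begin{enumerate}
\item parametrize admissible triples as $z_1=-a_1+ib_1$, $z_2=-a_2+ib_2$ with $a_j>0$ and $z_0=\pm2\sqrt{a_1a_2}$;
\item substitute into \eqref{S} and rewrite $|S_\theta(z_0,z_1,z_2)|^2-1$ as a rational function $R(a_1,a_2,b_1,b_2,\theta)$ with positive denominator $|p|^4$;
\item locate a one-parameter subfamily of $(a_1,a_2,b_1,b_2)$ along which the numerator of $R$ factors into $(2-5\theta)\,Q(\cdot)$ with $Q\ge0$, so that $|S_\theta|>1$ whenever $\theta<\tfrac25$.
\end{enumerate}

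The main obstacle is step~(iii): identifying the correct extremal subfamily in a four-dimensional parameter space in which no obvious symmetry reduces the problem to a lower-dimensional slice. Unlike Theorems~\ref{Thm1} and~\ref{Thm2}, where purely imaginary or equal real choices saturate the bound, here the extremum is genuinely asymmetric. The guiding heuristic is that the cross term $\theta z_0(z_0+z)/p^2$ in \eqref{S} contributes to $\Im S_\theta$ only when $z_0\in\mathbb{R}$ and $z\notin\mathbb{R}$; this suggests seeking an extremum where $z_0$ saturates the constraint while $z=z_1+z_2$ has a substantial imaginary component (for example, $a_1$ small, $a_2$ of moderate size, and $b_1,b_2$ chosen so that $b_1+b_2$ is large relative to $\sqrt{a_1a_2}$). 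Once this subfamily is pinned down, the remaining algebra producing the factor $(2-5\theta)$ reduces to an elementary polynomial identity, completing the argument.
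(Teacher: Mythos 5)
There is a genuine gap: your step~(iii) is the entire content of the theorem, and you never carry it out. Everything before it (boundary attainment, ruling out the purely imaginary and conjugate cases, the stiff limit) is scaffolding that is not even needed for a necessary condition --- to prove necessity one only has to exhibit a single family of admissible triples on which $|S_\theta|>1$ for $\theta<\tfrac25$ --- and the heuristic you offer for locating that family points in the wrong direction. The paper's proof takes $z_0=-2a$, $z_1=z_2=a(1+I)$ with $a\uparrow 0$, so that $|z_0|=2\sqrt{\Re z_1\Re z_2}$ is saturated, and performs a Taylor expansion at the origin, obtaining
\[
|S_\theta(-2a,a(1+I),a(1+I))|^2 = 1+(40\theta^2-16\theta)\,a^3+\mathcal{O}(a^4),
\]
which exceeds $1$ for small $a<0$ unless $40\theta^2-16\theta\ge 0$, i.e.\ $\theta\ge\tfrac25$. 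This extremal family is \emph{symmetric} ($z_1=z_2$, equal rather than conjugate imaginary parts) and \emph{infinitesimal}, directly contradicting your assertion that the configuration ``must be genuinely asymmetric'' with ``$a_1$ small, $a_2$ of moderate size'' and $b_1+b_2$ large relative to $\sqrt{a_1a_2}$. Following that heuristic you would be searching a four-parameter space far from where the obstruction actually lives; the mechanism is a third-order (cubic) term in a local expansion, not a global factorization of the numerator of $|S_\theta|^2-1$ into $(2-5\theta)\,Q$ with $Q\ge 0$ (no such clean global factorization is claimed or needed).

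Two secondary points. First, your reduction of the conjugate case $z_2=\bar z_1$ to Theorem~\ref{Thm2} is not quite right: while $S_\theta$ is indeed real there, the pair $(p,z)=\bigl(|1-\theta z_1|^2,\,2\Re z_1\bigr)$ does not range over the same set as for real $z_1,z_2$, so the conclusion of Theorem~\ref{Thm2} does not transfer verbatim; this does not damage your argument since you only use it to discard that case, but it is not a valid ``collapse.'' Second, the maximum-modulus reasoning over the non-product admissible set defined by (\ref{cond}) would need justification if you actually relied on it; fortunately, for a necessity proof you do not. The fix is simply to replace steps (i)--(iii) by the explicit one-parameter family above and the cubic expansion.
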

\begin{proof}
The result is obtained by a Taylor expansion at the point $z_0=z_1=z_2=0$. 
We take $z_0 = -2a$ and $z_1=z_2=a \eta$ where $\eta=1+I$ and $a\in\R$ with 
$a\uparrow 0$. 
This choice was found to be convenient after numerical experimentation. 
Inserting into (\ref{S}) and using $1/(1-\xi) = 1+\xi+\xi^2+{\cal O}(\xi^3)$ 
($\xi\rightarrow 0$), 
it follows that
\begin{eqnarray*}
  S_\theta (-2a,a \eta,a \eta)  
  &=& 1 + \frac{2 I a}{(1-\theta a\eta)^2} + (\theta-\tfrac{1}{2}-\theta I)
\frac{4a^2}{(1-\theta a\eta)^4}\\
  &=& 1 + 2 I a - 2a^2 + (20\theta^2-8\theta-8\theta I)a^3 + {\cal O}(a^4). 
\end{eqnarray*}
This yields
\[
|S_\theta (-2a,a \eta,a \eta)|^2  ~=~ 1 + (40\theta^2-16\theta)a^3 + {\cal O}(a^4).
\phantom{xxxxxxxxxxxxxx}
\]
The right-hand side is bounded by 1 for $a\uparrow 0$ only if 
$40\theta^2-16\theta \ge 0$.
Hence, it must hold that $\theta \geq \frac{2}{5}$.
\quad\mbox{\tiny $\blacksquare$}
\end{proof}
\vskip0.3cm\noindent
Based on strong numerical evidence (see Sect.~\ref{Conclusion}) we 
conjecture that the condition on $\theta$ in Theorem \ref{Thm3} is 
also sufficient, but a proof is currently lacking.

The above results dealt with real-valued $z_0$.
The two subsequent theorems concern arbitrary, complex-valued $z_0$.
A preliminary result is

\begin{lemma} \label{Le1}
Let $a, b, c \in \R$ be given. 
If\, $|a+b+c|=1$ and\, $|a\zeta^2+b\zeta+c| \leq 1$
whenever $\zeta \in \C$ with $|\zeta|=1$, then 
$ab+bc+4ac\geq 0$.
\end{lemma}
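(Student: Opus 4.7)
The plan is to parametrize the unit circle and reduce the problem to a one-variable extremal argument. Set $\zeta = e^{i\phi}$ and define
\begin{equation*}
f(\phi) := |a\zeta^2 + b\zeta + c|^2.
\end{equation*}
A direct expansion, using the product-to-sum identity $\cos(2\phi)\cos\phi + \sin(2\phi)\sin\phi = \cos\phi$, should yield the compact expression
\begin{equation*}
f(\phi) = a^2 + b^2 + c^2 + 2(ab+bc)\cos\phi + 2ac\cos 2\phi.
\end{equation*}
This is the routine trigonometric computation at the heart of the argument.

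The key observation is how the hypothesis $|a+b+c|=1$ is used: evaluating at $\phi=0$ gives $f(0) = (a+b+c)^2 = 1$. Combined with the assumption $f(\phi) \le 1$ for all real $\phi$, this identifies $\phi = 0$ as a point where $f$ attains its maximum on $\mathbb{R}$. Since $f$ is smooth and $\phi=0$ is an interior maximizer, the second-order necessary condition $f''(0) \le 0$ must hold.

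Computing the derivatives gives $f'(\phi) = -2(ab+bc)\sin\phi - 4ac \sin 2\phi$, so $f'(0)=0$ automatically (consistent with $\phi=0$ being critical), and
\begin{equation*}
f''(0) = -2(ab+bc) - 8ac = -2\bigl(ab+bc+4ac\bigr).
\end{equation*}
The inequality $f''(0) \le 0$ then immediately rearranges to $ab+bc+4ac \ge 0$, which is the desired conclusion.

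There is no real obstacle here beyond recognizing the right reduction: the hypothesis $|a+b+c|=1$ is precisely what forces $\phi=0$ to be an interior maximizer of $f$, so that a second-derivative test at that single point yields the stated inequality. The only care needed is the trigonometric simplification producing the simple Fourier form of $f$; everything else is a standard necessary condition for a smooth maximum.
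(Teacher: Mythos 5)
Your proof is correct and follows essentially the same route as the paper: parametrize the circle, obtain $f(\varphi)=a^2+b^2+c^2+2(ab+bc)\cos\varphi+2ac\cos 2\varphi$, and use $f(0)=1$ together with $f\le 1$ to force $f''(0)=-2(ab+bc+4ac)\le 0$. The paper phrases the last step as a Taylor expansion of $f$ at $0$, but that is the same second-order necessary condition you invoke.
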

\begin{proof}
Consider the function $f$ defined by
\[
f(\varphi)=|ae^{2I\varphi}+be^{I\varphi}+c|^2 \quad (\varphi \in \R).
\]
There holds
\begin{equation*}
\begin{split}
f(\varphi)&=[a\cos(2\varphi)+b\cos(\varphi)+c]^2 +[a\sin(2\varphi)+b\sin(\varphi)]^2\\
&=a^2+b^2+c^2+2ab \cos(\varphi)+2bc\cos(\varphi)+2ac\cos(2\varphi).
\end{split}
\end{equation*}
One readily verifies that $f(0)=1$, $f^\prime (0)=0$, $f^{\prime\prime}(0)=-2(ab+bc+4ac)$ 
and hence
\[
f(\varphi) =1-(ab+bc+4ac)\varphi^2+\mathcal{O}(\varphi^3) \quad (\varphi \rightarrow 0).
\]
Using that $f(\varphi)\leq 1$ whenever $\varphi \in \R$, proves the assertion.
\quad\mbox{\tiny$\blacksquare$}
\end{proof}
\vskip0.3cm\noindent
For the case where $z_1, z_2$ are real-valued, we obtain the following necessary 
lower bound on $\theta$.
Numerical experiments indicate that this bound is sufficient as well. 

\begin{theorem}\label{Thm4}
Suppose $|S_\theta (z_0,z_1,z_2)| \leq 1$ for all $z_0 \in \C$ and $z_1, z_2\in \R$
satisfying $(\ref{cond})$.
Then $\theta \geq \frac{5}{12}$.
\end{theorem}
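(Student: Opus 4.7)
The plan is to apply Lemma~\ref{Le1} on the boundary of the admissible set, where $|z_0|=2\sqrt{\Re z_1\Re z_2}$ with $z_1,z_2$ real. Specifically, I would take $z_1=z_2=-s$ with $s>0$ and $z_0=2s\zeta$ for $\zeta\in\C$ with $|\zeta|=1$, which obviously satisfies condition (\ref{cond}). Writing $q=1+\theta s$, so that $z=-2s$ and $p=q^2$, the quantity $z_0+z=2s(\zeta-1)$ is a polynomial of degree $1$ in $\zeta$, and inspection of (\ref{S}) then shows that $S_\theta(2s\zeta,-s,-s)$ is a polynomial $a\zeta^2+b\zeta+c$ with \emph{real} coefficients. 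Moreover, at $\zeta=1$ the factor $z_0+z$ vanishes, giving $S_\theta=1$ and hence $a+b+c=1$.

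Next I would apply Lemma~\ref{Le1}: stability of $S_\theta$ on $|\zeta|=1$ forces $ab+bc+4ac\ge 0$. A direct expansion of (\ref{S}) gives
\begin{equation*}
a=\frac{2s^2}{q^4},\qquad
b=\frac{2s}{q^2}-\frac{4(1-\theta)s^2}{q^4},\qquad
c=1-\frac{2s}{q^2}+\frac{(2-4\theta)s^2}{q^4}.
\end{equation*}
Introducing the convenient variable $t=s/q^2$, one has $a=2t^2$, $b=2t-4(1-\theta)t^2$, $c=1-2t+(2-4\theta)t^2$. A routine (but careful) algebraic calculation then yields the compact expression
\begin{equation*}
ab+bc+4ac \;=\; 2t+4\theta t^2-16\theta t^3-16\theta^2 t^4.
\end{equation*}
Dividing by $2t>0$, the necessary condition reduces to
\begin{equation*}
g(t):=1+2\theta t-8\theta t^2-8\theta^2 t^3 \;\ge\; 0 \qquad \text{for all admissible } t.
\end{equation*}

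The final step is to pick an $s$ that makes $g(t)$ as small as possible. As $s$ ranges over $(0,\infty)$, $t=s/(1+\theta s)^2$ attains its maximum $t=1/(4\theta)$ at $s=1/\theta$; substituting this value gives
\begin{equation*}
g\!\left(\tfrac{1}{4\theta}\right)=\tfrac{3}{2}-\tfrac{5}{8\theta},
\end{equation*}
so $g(1/(4\theta))\ge 0$ forces $\theta\ge\tfrac{5}{12}$.

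The main obstacle is purely algebraic: verifying that the sum $ab+bc+4ac$ collapses to the short polynomial in $t$ above. The miraculous cancellations (the $t^0$ and the higher-order pieces combining cleanly into $-16\theta^2 t^4$) must be checked carefully, and the choice $s=1/\theta$ — equivalent to $\theta s=1$, i.e.~$q=2$ — is essential; any other single value of $s$ yields a weaker constraint than $\theta\ge\tfrac{5}{12}$.
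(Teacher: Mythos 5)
Your proposal is correct and follows essentially the same route as the paper: restrict to $z_1=z_2<0$ with $z_0$ on the circle $|z_0|=2\sqrt{\Re z_1\Re z_2}$, apply Lemma~\ref{Le1} to the resulting real quadratic in $\zeta$, and optimize the ensuing necessary condition, with your choice $s=1/\theta$ corresponding exactly to the paper's critical value $x=\theta y=2$. I checked the expansion $ab+bc+4ac=2t+4\theta t^2-16\theta t^3-16\theta^2t^4$ and the evaluation $g(1/(4\theta))=\tfrac{3}{2}-\tfrac{5}{8\theta}$; both are correct.
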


\begin{proof}
Setting $q=p^2+pz+(\frac{1}{2}-\theta)z^2$ and  $w=p+(1-\theta)z$, we can write
\begin{equation}\label{Sform}
S_\theta (z_0,z_1,z_2) = \frac{\tfrac{1}{2} z_0^2 + w z_0 + q}{p^2}\,.
\end{equation}
Let $y=2\sqrt{z_1 z_2}$.
Since \mbox{$|S_\theta (z_0,z_1,z_2)| \leq 1$} for all $z_0 \in \mathbb{C}$ 
with $|z_0| \leq y$ we have
\[
|\tfrac{1}{2} y^2 \zeta^2 + wy\zeta + q| \le p^2 \quad \textrm{for all } \zeta \in \C 
\textrm{ with } |\zeta|\le 1.
\]
Assume $z_1 = z_2$. Then $z=-y$ and it is easily seen that $\tfrac{1}{2} y^2 + wy + q = p^2$.
Therefore Lemma $\ref{Le1}$ can be applied and, using $y\ge 0$, this leads to the 
necessary condition 
\begin{equation}
\label{nec_con}
\tfrac{1}{2}wy^2 + qw + 2qy \geq 0 \quad \textrm{whenever } z_1 = z_2 \le 0.
\end{equation}
Denote $x = \theta y$. 
Then 
\[
p = 1 + x + \tfrac{1}{4} x^2.
\]
Next, after some computations, there follows
\[
\tfrac{1}{2}wy^2 + qw + 2qy = p^3+p^2x - \frac{1}{\theta}(x^3+2px^2).
\]
By (\ref{nec_con}), we arrive at
\[
\theta \ge \frac{x^3+2px^2}{p^3+p^2x}\,.
\]
The right-hand side is a rational function of $x\ge 0$, which is readily 
seen to have a global maximum at $x =2$.
Inserting this value yields the lower bound $\theta \ge \tfrac{5}{12}$.
\rightline{\mbox{\tiny$\blacksquare$}}
\end{proof}

The final result in this section concerns the most general case, where 
all $z_j$ are complex-valued.
To derive this result we employ a lemma from \cite{IHW07} pertinent to 
the condition $(\ref{cond})$.
For completeness, its concise proof is included here.
\begin{lemma}\label{Le2}
If $z_1, z_2\in \C$ with $\Re z_1 \le 0$, $\Re z_2 \le 0$, then
\[
2\sqrt{\Re z_1 \Re z_2} \le 
\left|\frac{p}{2\theta}\right| - \left|\frac{p}{2\theta}+z\right|.
\]
\end{lemma}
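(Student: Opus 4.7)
The plan is to reduce the inequality to a clean algebraic statement about moduli of certain shifted quantities by making the substitution $w_j = 1-\theta z_j$, and then to finish by two applications of AM--GM.

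First, I would rewrite both quantities on the right-hand side in factored form. Since $p = (1-\theta z_1)(1-\theta z_2) = w_1 w_2$ and $z = z_1+z_2 = (2 - w_1 - w_2)/\theta$, a direct computation gives
\[
\frac{p}{2\theta} + z = \frac{w_1 w_2 + 2(2-w_1-w_2)}{2\theta} = \frac{(w_1-2)(w_2-2)}{2\theta}.
\]
Hence the claim is equivalent to
\[
4\theta\sqrt{\Re z_1 \Re z_2} \;\le\; |w_1|\,|w_2| - |w_1-2|\,|w_2-2|.
\]

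Next, I would set $r_j=|w_j|$, $s_j=|w_j-2|$, and $t_j=-\Re z_j\ge 0$. Because $w_j$ and $w_j-2$ share the same imaginary part while $\Re w_j = 1+\theta t_j$ and $\Re(w_j-2) = -1+\theta t_j$, one gets $r_j^2 - s_j^2 = 4\theta t_j \ge 0$. In particular $r_j\ge s_j$, so the right-hand side above is nonnegative, and it suffices to prove
\[
r_1 r_2 \;\ge\; s_1 s_2 + 4\theta\sqrt{t_1 t_2}.
\]

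The final step is a two-line AM--GM argument. Using $r_j^2 = s_j^2 + 4\theta t_j$,
\[
r_1^2 r_2^2 = s_1^2 s_2^2 + 4\theta\bigl(t_1 s_2^2 + t_2 s_1^2\bigr) + 16\theta^2 t_1 t_2,
\]
and by AM--GM on the cross term,
\[
4\theta(t_1 s_2^2 + t_2 s_1^2) \;\ge\; 8\theta s_1 s_2\sqrt{t_1 t_2},
\]
so the right-hand side is at least $(s_1 s_2 + 4\theta\sqrt{t_1 t_2})^2$. Taking square roots completes the proof.

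I do not anticipate a real obstacle: once the algebraic identity $p/(2\theta)+z = (w_1-2)(w_2-2)/(2\theta)$ is noticed, everything collapses to a symmetric inequality in $(s_j,t_j)$ that AM--GM handles immediately. The only mildly delicate point is verifying that the quantities being squared are nonnegative before passing to squared form, which follows from $\Re z_j\le 0$ giving $\Re w_j\ge 1$ and hence $r_j\ge s_j$.
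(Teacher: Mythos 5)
Your proof is correct and is essentially the paper's argument in different clothing: the identity $p/(2\theta)+z=(w_1-2)(w_2-2)/(2\theta)$ is the same factorization $(1+\theta z_1)(1+\theta z_2)=p+2\theta z$ that the paper uses, your relation $r_j^2=s_j^2+4\theta t_j$ is the paper's norm computation for the vectors $\mathbf{v}_j$, and your AM--GM step on the cross term is exactly the Cauchy--Schwarz inequality for those two-dimensional vectors. No gaps; the nonnegativity checks needed before taking square roots are handled correctly.
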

\begin{proof}
Define the vectors
\[
\mathbf{v}_j = 
\left(
  \begin{array}{c}
    \sqrt{-2\Re z_j}\\[5pt]
    \left|1+\theta z_j\right|/\sqrt{2\theta}\\
  \end{array}
\right), \quad j=1, 2.
\]
Their Euclidean norms are
\[ \|\mathbf{v}_j\|
=\sqrt{-2\Re z_j +\frac{|1+\theta z_j|^2}{2\theta}} \,=\,\frac{|1-\theta
z_j|}{\sqrt{2\theta}}.
\]
Next, their standard inner product is
\[
\langle \mathbf{v}_1 , \mathbf{v}_2 \rangle =
2\sqrt{\Re z_1\Re z_2} + \frac{|(1+\theta z_1)(1+\theta
z_2)|}{2\theta} =
2\sqrt{\Re z_1 \Re z_2} + \left|\frac{p}{2\theta}+z\right|.
\]
Applying the Cauchy--Schwarz inequality gives
\[
2\sqrt{\Re z_1 \Re z_2} + \left|\frac{p}{2\theta}+z\right|
\le \frac{|1-\theta z_1||1-\theta z_2|}{2\theta}
= \left|\frac{p}{2\theta}\right|,
\]
which concludes the proof.\quad\mbox{\tiny$\blacksquare$}
\end{proof}
\vskip0.3cm\noindent
For the most general case, we have the following positive result:
\begin{theorem}\label{Thm5}
If $\frac{1}{2}\leq\theta\leq1$, then $|S_\theta (z_0, z_1, z_2)| \leq1$ 
whenever $z_0, z_1, z_2\in \C$ satisfy $(\ref{cond})$.
\end{theorem}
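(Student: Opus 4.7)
The plan is to analyze $|S_\theta|$ via the quadratic representation $(\ref{Sform})$. Because $\Re z_j \le 0$ forces $p = (1-\theta z_1)(1-\theta z_2) \neq 0$, the function $S_\theta(\cdot, z_1, z_2) = N(\cdot)/p^2$, with $N(z_0) = \tfrac{1}{2}z_0^2 + wz_0 + q$, is an entire rational function of $z_0$. Setting $y = 2\sqrt{\Re z_1 \Re z_2}$, the maximum modulus principle applied to $N$ on the closed disk $\{|z_0| \le y\}$ reduces the claim to $|N(z_0)| \le |p|^2$ on the boundary $|z_0| = y$. The triangle inequality there reduces the theorem to the key sharp estimate
\[ \tfrac{1}{2}y^2 + |w|y + |q| \le |p|^2. \quad (\ast) \]

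To attack $(\ast)$ the central tool is Lemma~\ref{Le2}, which provides $|p + 2\theta z| \le |p| - 2\theta y$. For $\theta \in [\tfrac{1}{2},1]$ both $w$ and $p+z$ can be written as nonnegative convex combinations of $p$ and $p + 2\theta z$, namely
\[ w = \tfrac{3\theta-1}{2\theta}\,p + \tfrac{1-\theta}{2\theta}(p + 2\theta z), \qquad p + z = \tfrac{2\theta-1}{2\theta}\,p + \tfrac{1}{2\theta}(p + 2\theta z), \]
and Lemma~\ref{Le2} then yields the clean estimates $|w| \le |p| - (1-\theta)y$ and $|p+z| \le |p| - y$. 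Together with the identity $q = p(p+z) + (\tfrac{1}{2}-\theta)z^2$, these immediately settle the case $\theta = \tfrac{1}{2}$: one has $|q| = |p||p+z| \le |p|(|p|-y)$, so the left-hand side of $(\ast)$ collapses to exactly $|p|^2$, recovering the known stability of the Craig--Sneyd scheme.

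The main obstacle is the range $\theta \in (\tfrac{1}{2},1]$, where the sign-indefinite term $(\tfrac{1}{2}-\theta)z^2$ in $q$ partially cancels $p(p+z)$; a crude triangle inequality on $|q|$ destroys this cancellation and over-estimates the left side of $(\ast)$ by $(\theta - \tfrac{1}{2})(|z|^2 + y^2)$. To preserve the cancellation I would exploit the algebraic identity
\[ 2N(z_0) = (z_0 + p + z)(z_0 + p + (1-2\theta)z) + p(p + 2\theta z), \]
bounding the first factor by $|z_0 + p + z| \le y + |p+z| \le |p|$ and the last term by $|p|(|p|-2\theta y)$ via Lemma~\ref{Le2}. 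The delicate step, and the one I expect to be hardest, is a tight bound on the middle factor $|z_0 + p + (1-2\theta)z|$: when $\theta > \tfrac{1}{2}$, $p + (1-2\theta)z$ is no longer a nonnegative convex combination of $p$ and $p + 2\theta z$, so the simple convex-combination argument fails and, as one can check on examples with large imaginary parts of $z_j$, the naive bound $|p+(1-2\theta)z| \le |p| + (2\theta-1)y$ need not hold. I would obtain the required bound by a tailored Cauchy--Schwarz argument in the spirit of Lemma~\ref{Le2}, choosing auxiliary vectors whose inner product produces the coefficient $1-2\theta$ instead of $2\theta$, and using the hypothesis $\theta \le 1$ to keep the resulting coefficients admissible. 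Combining all bounds then reduces $(\ast)$ to an elementary polynomial inequality in the two variables $|p|$ and $y$ subject to $|p| \ge 2\theta y$, which can be verified by direct calculation on the admissible region.
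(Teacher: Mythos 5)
Your setup is sound and overlaps substantially with the paper's: both start from the representation $(\ref{Sform})$, both reduce via the triangle inequality to bounding $\tfrac{1}{2}|z_0|^2+|w||z_0|+|q|$ by $|p|^2$, and both lean on Lemma~\ref{Le2}. Your convex-combination bounds $|w|\le|p|-(1-\theta)y$ and $|p+z|\le|p|-y$ are correct, your treatment of $\theta=\tfrac12$ is complete, and the identity $2N(z_0)=(z_0+p+z)\,(z_0+p+(1-2\theta)z)+p\,(p+2\theta z)$ checks out.

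For the main case $\tfrac12<\theta\le 1$, however, there is a genuine gap, and in fact the decomposition you propose cannot be pushed through. You leave the ``delicate step'' (a bound on $|z_0+p+(1-2\theta)z|$) as a sketch, but no bound on that factor, however sharp, can rescue the argument: take $z_0=0$ and $z_1=z_2=Ib$ with $b$ large, so that $y=0$ and $(\ref{cond})$ holds. Then $|p+2\theta z|=|p|$, while $|p+z|^2=|p|^2-\bigl(4\theta^2-(2-2\theta)^2\bigr)b^2$ and $|p+(1-2\theta)z|^2=|p|^2+\bigl((2-6\theta)^2-4\theta^2\bigr)b^2$; since $(2-2\theta)^2+(2-6\theta)^2-8\theta^2=8(2\theta-1)^2>0$, the product $|p+z|\,|p+(1-2\theta)z|$ exceeds $|p|^2$ for $b$ large (for $\theta=1$, $b=10$ one gets $99\cdot\sqrt{11401}\approx 10571>10201=|p|^2$). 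Hence the triangle inequality applied to your identity already yields $2|q|\le|p+z|\,|p+(1-2\theta)z|+|p|\,|p+2\theta z|>2|p|^2$, so the target inequality $(\ast)$ (which is true) cannot be recovered from this two-term splitting with factorwise moduli. The paper circumvents exactly this loss of cancellation: it never estimates $q$ and $w$ by separate moduli of their pieces, but writes $1+2\theta z/p=re^{I\varphi}$ with $0\le r\le 1$ (this is where Lemma~\ref{Le2} enters), proves that $f_1(\varphi,r)=|2\theta+(1-\theta)(re^{I\varphi}-1)|$ and $f_2(\varphi,r)=|8\theta^2+4\theta(re^{I\varphi}-1)+(1-2\theta)(re^{I\varphi}-1)^2|$ are nonincreasing in $\varphi$ on $[0,\pi]$ --- the monotonicity of $f_2$ being precisely where $\tfrac12\le\theta\le1$ is needed --- and then verifies that at $\varphi=0$ the upper bound $(\ref{upS})$ equals $1$ exactly. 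Some argument of this kind, retaining the phase of $z/p$ rather than passing to moduli of individual terms, appears unavoidable here.
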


\begin{proof}
The expression (\ref{Sform}) for $S_\theta $ yields 
\[
|S_\theta (z_0,z_1,z_2)| \leq \frac{1}{2}\left|\frac{z_0}{p}\right|^2 + 
\left|\frac{z_0}{p}\right| \left|1+(1-\theta)\frac{z}{p}\right| + 
\left|1+\frac{z}{p}+\left(\frac{1}{2}-\theta\right)\frac{z^2}{p^2}\right|.
\]
By invoking Lemma \ref{Le2} it follows that $|S_\theta (z_0,z_1,z_2)|$ is bounded 
from above by
\[
\frac{1}{2}\left( \frac{1}{2\theta} - \left|\frac{1}{2\theta}+\frac{z}{p}\right|\right)^2 
+\left( \frac{1}{2\theta} - \left|\frac{1}{2\theta}+\frac{z}{p}\right|\right) 
\left|1+(1-\theta)\frac{z}{p}\right| + \left|1+\frac{z}{p}+\left(\frac{1}{2}-\theta\right)\frac{z^2}{p^2}\right|.
\]
\vskip2pt\noindent
We can write
\[
1+2\theta \frac{z}{p}= re^{I\varphi} \quad 
\textrm{with } 0\leq r\leq 1 \textrm{ and } 0\leq \varphi < 2\pi.
\]
Define 
\begin{eqnarray*}
  f_1(\varphi,r) &=& \left|2\theta+(1-\theta)(re^{I\varphi}-1)\right|, \\
  f_2(\varphi,r) &=& \left|8\theta^2+4\theta(re^{I\varphi}-1)+ (1-2\theta)(re^{I\varphi}-1)^2\right|.
\end{eqnarray*}
Then it follows that
\begin{equation}\label{upS}
|S_\theta (z_0,z_1,z_2)| \le \frac{(1-r)^2 + 2(1-r)f_1(\varphi,r) + f_2(\varphi,r)}{8\theta^2}\,.
\end{equation}

Let $\frac{1}{2}\leq\theta\leq1$.
We prove that the right-hand side of (\ref{upS}) is bounded by $1$ for all $0\leq r\leq 1$, 
$0\leq \varphi < 2\pi$.
First note that $f_j(2\pi-\varphi,r) = f_j(\varphi,r)$ ($j=1, 2$) and therefore it suffices 
to consider $0\leq \varphi \le \pi$.
Let $r\in [0,1]$ be fixed but arbitrary and define $g_j(\varphi) = f_j(\varphi,r)^2$
($j=1, 2$).
For the function $g_1$ it is readily verified that
\[
g_1^\prime (\varphi) = -2(3\theta-1)(1-\theta) r \sin \varphi. 
\]
This directly implies that $g_1$, and hence $f_1$, is nonincreasing on $[0,\pi]$.
For the function $g_2$ a more elaborate computation shows
\[
g_2^\prime (\varphi) = 4(2\theta -1)(4\theta-1)
\left[ 2(2\theta -1) r \cos\varphi + r^2 - (4\theta-1) \right] r \sin\varphi.
\]
In view of 
\[
2(2\theta -1) r \cos\varphi + r^2 - (4\theta-1) \le 
2(2\theta -1) + 1 - (4\theta-1) = 0
\] 
we find that also $g_2$, and hence $f_2$, is nonincreasing on $[0,\pi]$.
Consequently, it is sufficient to prove that the right-hand side of (\ref{upS}) 
is bounded by $1$ whenever $0\leq r\leq 1$, $\varphi =0$. 
Write $s=r-1 \in [-1, 0]$. 
One easily verifies that
\begin{eqnarray*}
f_1(0,r) &=& 2\theta+(1-\theta)s, \\
f_2(0,r) &=& 8\theta^2+4\theta s + (1-2\theta) s^2.
\end{eqnarray*}
Inserting this and rearranging terms, it follows that the upper 
bound (\ref{upS}) is (in fact) equal to $1$ whenever $0\leq r\leq 1$, 
$\varphi =0$.
\quad\mbox{\tiny$\blacksquare$}
\end{proof}
\vskip0.3cm\noindent
Numerical evidence leads to the conjecture that the conclusion of Theorem 
\ref{Thm5} is valid for all $\theta \geq \frac{5}{12}$,\, i.e.,~under the 
(necessary) lower bound of Theorem \ref{Thm4}.
A proof of this does not appear to be straightforward.
We note that in the proof above the assumption $\frac{1}{2} \leq \theta \leq 1$ 
is used in an essential manner.

\setcounter{equation}{0}
\setcounter{theorem}{0}
\section{Application and discussion}\label{Conclusion}
In this section we discuss an application 
to convection-diffusion equations (\ref{PDE}).
We semi-discretize on the unit square $[0,1]\times [0,1]$ by using central 
second-order FD schemes on a Cartesian grid with mesh widths $\Delta x$ 
and $\Delta y$ in the $x$ and $y$ directions, respectively:
\begin{subeqnarray}\label{xy}
\left(u_x\right)_{i,j}
&\approx &  \frac{u_{i+1,j}-u_{i-1,j}}{2\Delta x}\\
\left(u_y\right)_{i,j}
&\approx & \frac{u_{i,j+1}-u_{i,j-1}}{2\Delta y}\\
\left(u_{xx}\right)_{i,j}
&\approx & \frac{u_{i+1,j}-2u_{i,j}+u_{i-1,j}}{(\Delta x)^2}\\
\left(u_{yy}\right)_{i,j}
&\approx & \frac{u_{i,j+1}-2u_{i,j}+u_{i,j-1}}{(\Delta y)^2}\\
\left(u_{xy}\right)_{i,j}
&\approx
&\frac{(1+\beta)(u_{i+1,j+1}+u_{i-1,j-1})-(1-\beta)(u_{i-1,j+1}+u_{i+1,j-1})}
{4\Delta x \Delta y}\notag\\
&&+\frac{4\beta u_{i,j}-2\beta(u_{i+1,j}+u_{i,j+1}+u_{i-1,j}+u_{i,j-1})}
{4\Delta x \Delta y}\,.
\end{subeqnarray}
Here $\beta$ denotes a real parameter with $-1\le \beta\le 1$ and we use
the notation $u_{i,j} = u(i \Delta x, j\Delta y, t)$.
We note that the right-hand side of (\ref{xy}e) is the most general form of a 
second-order FD approximation of the mixed derivative $u_{xy}$ based on a 
centered 9-point stencil.
When $\beta=0$, it reduces to the well-known 4-point formula
\[ \left(u_{xy}\right)_{i,j}
\approx \frac{u_{i+1,j+1}+u_{i-1,j-1}-u_{i-1,j+1}-u_{i+1,j-1}}{4\Delta x \Delta y}\,.
\]

Assuming constant coefficients and a periodic boundary condition for (\ref{PDE}), 
the above FD discretization yields a splitted, semi-discrete system (\ref{ODE}), 
(\ref{splitting}) where $F_j(t,v)=A_j v$ for $j=0,1,2$ with constant matrices $A_j$.
The matrix $A_0$ represents the cross derivative term in (\ref{PDE}) and $A_1$, 
$A_2$ represent the spatial derivatives in the $x$ and $y$ directions, 
respectively.
The periodicity condition implies that the $A_j$ are Kronecker products of 
circulant (thus normal) matrices that commute with each other, and are 
therefore simultaneously diagonalizable by a unitary matrix.
Hence, stability can be rigorously analyzed by considering the scalar test 
equation (\ref{test}) with $\lambda_j$ eigenvalues of $A_j$ ($0\le j\le 2$). 
This is equivalent to a von Neumann stability analysis.
By inserting discrete Fourier modes, it follows that the scaled eigenvalues 
$z_j$ are given by
\begin{subeqnarray}\label{zzz}
z_0&=&(d_{12}+d_{21})\,b\,[-\sin\phi_1\sin\phi_2+\beta(1-\cos\phi_1)(1-\cos\phi_2)]\,, \\
z_1&=&-2d_{11}a_1(1-\cos\phi_1)+Ic_1q_1\sin\phi_1\,, \\
z_2&=&-2d_{22}a_2(1-\cos\phi_2)+Ic_2q_2\sin\phi_2\,,
\end{subeqnarray}
\addtocounter{equation}{-1}
where
\[ 
a_1=\frac{\Delta t}{(\Delta x)^2}\,,\quad
a_2=\frac{\Delta t}{(\Delta y)^2}\,,\quad
b=\frac{\Delta t}{\Delta x\Delta y}\,, \quad
q_1=\frac{\Delta t}{\Delta x}\,,\quad
q_2=\frac{\Delta t}{\Delta y}\,.
\]
The angles $\phi_j$ are integer multiples of $2\pi/m_j$ ($j=1,2$) where
$m_1$, $m_2$ are the dimensions of the grid in the $x$ and $y$ directions, 
respectively.

Using the positive semi-definiteness of the diffusion matrix $D$, an elementary 
calculation shows \cite{IHW07} that $z_0, z_1, z_2$ fulfill the condition 
(\ref{cond}), independently of $\Delta t$, $\Delta x$, $\Delta y$.
Upon invoking Theorem \ref{Thm5} the following neat stability result is
obtained for the MCS scheme applied to (\ref{PDE}).

\begin{theorem}\label{th3.1}
Consider equation (\ref{PDE}) with positive semi-definite matrix $D$ and 
periodic boundary condition.
Let the semi-discrete system (\ref{ODE}), (\ref{splitting}) be obtained 
after FD discretization and splitting as described in this section.
Then the MCS scheme (\ref{MCS}) is unconditionally stable when applied 
to (\ref{ODE}), (\ref{splitting}) whenever $\frac{1}{2} \le \theta \le 1$.
Moreover, this conclusion remains valid when any other stable FD 
discretizations for $u_x$\,, $u_y$ are used in place of (\ref{xy}a), 
(\ref{xy}b).
\end{theorem}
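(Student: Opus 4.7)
The plan is to reduce the problem to the scalar stability amplification factor $S_\theta$ and then invoke Theorem \ref{Thm5}. The paragraph immediately preceding the theorem already establishes the key structural fact: since $A_0, A_1, A_2$ are Kronecker products of circulant matrices and hence commute and are simultaneously unitarily diagonalizable, the matrix version of the MCS recurrence splits into a finite collection of uncoupled scalar recurrences of the form (\ref{unS}), one for each Fourier mode. Unconditional $\ell_2$-stability of the scheme is therefore equivalent to the uniform bound $|S_\theta(z_0,z_1,z_2)|\le 1$ over all scaled eigenvalue triples arising from (\ref{zzz}). So all that is left to do is to verify that every such triple satisfies the hypothesis (\ref{cond}) of Theorem \ref{Thm5}, uniformly in $\phi_1,\phi_2$ and in $\Delta t,\Delta x,\Delta y$.

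The verification itself is a short trigonometric computation. From (\ref{zzz}) one reads off $\Re z_j = -2 d_{jj} a_j (1-\cos\phi_j)$ for $j=1,2$, which is non-positive because the diagonal entries of a positive semi-definite matrix are non-negative. For the bound $|z_0|\le 2\sqrt{\Re z_1\Re z_2}$ I would introduce the half-angle variables $s_j = \sin(\phi_j/2)$, $c_j=\cos(\phi_j/2)$, so that $1-\cos\phi_j = 2 s_j^2$ and $\sin\phi_j = 2 s_j c_j$. Substituting and using $b^2 = a_1 a_2$ lets the common factor $16\, a_1 a_2\, s_1^2 s_2^2$ cancel from both sides, reducing $|z_0|^2 \le 4\Re z_1\Re z_2$ to the $\phi$-dependent inequality
\[
(d_{12}+d_{21})^2 \bigl(c_1 c_2 - \beta s_1 s_2\bigr)^2 \le 4\, d_{11} d_{22}.
\]
The trigonometric factor is bounded by $1$: applying Cauchy--Schwarz to the vectors $(|c_1|,|s_1|)$ and $(|c_2|,|\beta||s_2|)$ and using $|\beta|\le 1$ gives $|c_1 c_2 - \beta s_1 s_2|\le |c_1||c_2|+|s_1||s_2|\le 1$. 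The constant factor is controlled by the $2\times 2$ positive semi-definiteness of $D$, which, applied to the symmetric part, reads exactly $(d_{12}+d_{21})^2 \le 4 d_{11} d_{22}$. Hence (\ref{cond}) holds and Theorem \ref{Thm5} yields $|S_\theta|\le 1$ for $\tfrac{1}{2}\le \theta\le 1$.

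For the \textbf{moreover} clause, the point is that replacing the central formulas (\ref{xy}a),(\ref{xy}b) by any other stable FD discretization changes $z_1$ and $z_2$ only through their convective pieces and leaves $z_0$ untouched. Von Neumann stability of the one-dimensional convection discretization on a periodic grid is equivalent to the Fourier symbol of its semi-discrete operator having non-positive real part, so the replacement can only decrease (make more negative) $\Re z_1$ and $\Re z_2$. The diffusive lower bound $-2 d_{jj} a_j (1-\cos\phi_j)$ on $\Re z_j$ therefore persists, and since $z_0$ is unaffected, the verification of (\ref{cond}) above goes through verbatim (with the right-hand side only enlarged). Theorem \ref{Thm5} applies again and the conclusion follows.

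I do not expect a substantial obstacle: the diagonalizability reduction is already in the text, Theorem \ref{Thm5} supplies the scalar bound, and the only non-trivial step is the trigonometric/PSD bookkeeping needed to check (\ref{cond}). The mildest subtlety is making sure that the $\beta$-parameter in (\ref{xy}e) does not break the inequality, but the Cauchy--Schwarz estimate above handles the full range $|\beta|\le 1$ uniformly.
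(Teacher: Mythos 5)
Your proposal follows the paper's own route exactly: reduce to the scalar recurrence (\ref{unS}) via the simultaneous unitary diagonalization of the $A_j$, check that the scaled eigenvalues (\ref{zzz}) satisfy (\ref{cond}) using the positive semi-definiteness of $D$, and invoke Theorem \ref{Thm5}, with the moreover clause handled by noting that other stable convection discretizations only make $\Re z_1,\Re z_2$ more negative while leaving $z_0$ unchanged. The only difference is that the paper delegates the verification of (\ref{cond}) to \cite{IHW07}, whereas you carry out the half-angle/Cauchy--Schwarz computation explicitly (correctly, including the full range $|\beta|\le 1$ and the determinant inequality $(d_{12}+d_{21})^2\le 4d_{11}d_{22}$ for the symmetric part of $D$).
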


\noindent 
The last part of Theorem \ref{th3.1} follows directly from the fact that 
the real parts of the new eigenvalues $z_1$, $z_2$ are always smaller 
than those of (\ref{zzz}b), (\ref{zzz}c), respectively, and hence,
(\ref{cond}) remains true.

An inspection of (\ref{zzz}a) yields that the eigenvalues $z_0$ have the 
property that their imaginary part is identically equal to zero.
Accordingly, it is of particular interest to know all parameter values $\theta$ 
such that the stability requirement $(\ref{cond}) \Rightarrow (\ref{stabcondS})$ 
holds for just real-valued $z_0$.
Theorem \ref{Thm5} provides the sufficient condition $\frac{1}{2} \le \theta \le 1$,
whereas Theorem \ref{Thm3} yields the necessary condition $\theta \geq \frac{2}{5}$.

Next, we remark that the MCS scheme has recently been applied successfully in \cite{IHF10} 
to actual convection-diffusion equations (\ref{PDE}) with mixed derivative terms using 
the parameter value $\theta = \frac{1}{3}$.
This seems to be surprising, as this value was determined \cite{IHW09} for pure 
diffusion equations (\ref{PDE}) and it clearly does not satisfy the necessary 
condition $\theta \geq \frac{2}{5}$ for equations with convection.
We note that reasons for choosing a smaller $\theta$ in the MCS scheme are a 
reduced error constant and better damping properties compared to the original CS 
scheme, see \cite{IHF10}.
\begin{figure}
\begin{center}
\includegraphics[width=0.95\textwidth]{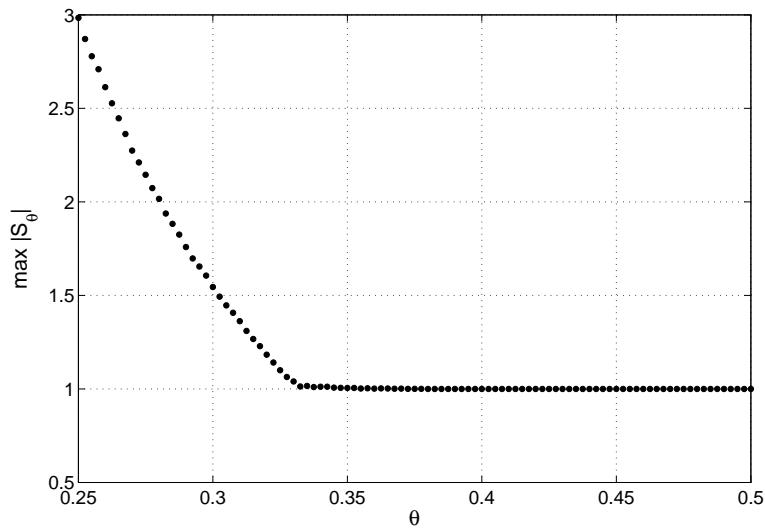}\\
\caption{Estimated maximum of $|S_\theta(z_0,z_1,z_2)|$ 
under (\ref{cond}) with $z_0\in \R$.}
\label{maxabs}
\end{center}
\end{figure}

Theoretical results on the latter two issues are not known at this moment.
To gain insight, we have performed a numerical experiment.
Let $r_{1,0}$ and $r_{i,j}$ for $i,j=1,2$ denote independent, uniformly 
distributed random numbers in $[0,1]$ and consider random triplets
$(z_0,z_1,z_2)$ given by
\begin{equation*}
z_0 = (2r_{1,0}-1)\cdot2\sqrt{\re z_1\re z_2} ~~~{\rm and}~~~
z_j = -10^{1-5r_{1,j}}\pm I\,10^{1-5r_{2,j}}~~(j=1,2).
\end{equation*}
Then (\ref{cond}) holds and $z_0\in \R$.
For each $\theta= \frac{1}{4} + \frac{k}{400}$ with $k=0,1,\ldots,100$ 
we computed the maximum value of $|S_\theta(z_0,z_1,z_2)|$ over two 
million points $(z_0,z_1,z_2)$ above.
The outcome is displayed in Figure \ref{maxabs}.

Figure \ref{maxabs} reveals the intriguing result that the estimated 
maximum value of $|S_\theta|$ is very close to $1$ whenever 
$\theta \ge \frac{1}{3}$.
For $\theta =  \frac{1}{3}$ we arrive at a maximum value of $1.02$.
Additional experiments in this case suggest that $|S_\theta|$ is larger 
than~1 for a limited set of points $(z_0,z_1,z_2)$, and at most~1 under 
only a slightly stronger condition on $z_0$ than in (\ref{cond}).
Because of these observations, it is very plausible that the MCS scheme 
performs well in actual applications to (\ref{PDE}), also with convection, 
already when $\theta = \frac{1}{3}$. 

Subsequently, an examination of the obtained numerical results indicates 
that $|S_\theta|\le 1$ for all $\theta \geq \frac{2}{5}$.
This supports our conjecture formulated below Theorem \ref{Thm3}.

In view of the above, it is likely that the condition on $\theta$ in 
Theorem \ref{th3.1} can be relaxed to $\theta \geq \frac{2}{5}$, and
next, that a slightly modified version of Theorem \ref{th3.1} holds
under the (weaker) assumption $\theta \ge \frac{1}{3}$.
In future research we intend to study these issues theoretically.

\section*{Acknowledgments}
This work has been supported financially by the Research Foundation--Flanders, 
FWO contract no. G.0125.08.

\end{document}